\title{``Weak yet strong'' restrictions of Hindman's Finite Sums Theorem}
\author{Lorenzo Carlucci \\
  Dipartimento di Informatica, Sapienza --- Universit\`a di Roma.\\
  \texttt{carlucci@di.uniroma1.it}}
\date{\today{}}
\newtheorem{definition}{Definition}
\newtheorem{proposition}{Proposition}
\newtheorem{theorem}{Theorem}
\newtheorem{claim}{Claim}
\newtheorem{open.problem}{Open Problem}
\newtheorem*{theorem*}{Theorem}
\newtheorem*{corollary*}{Corollary}
\newtheorem*{proposition*}{Proposition*}
\newtheorem*{lemma*}{Lemma}
\newtheorem*{fact*}{Fact}
\newtheorem*{claim*}{Claim}
\newtheorem*{open.problem*}{Open Problem}
\newtheorem*{remark*}{Remark}
\newtheorem*{example*}{Example}
\newtheorem*{exercise*}{Exercise}
\newcommand\Nat{\mathbf{N}}
\newcommand\RCA{\mathsf{RCA}}
\newcommand\ACA{\mathsf{ACA}}
\newcommand\HT{\mathsf{HT}}
\newcommand\RT{\mathsf{RT}}
\begin{document} 


\maketitle

\begin{abstract}
We present a natural restriction of Hindman's Finite Sums Theorem 
that admits a simple combinatorial proof (one that does not also prove 
the full Finite Sums Theorem) and low computability-theoretic and proof-theoretic
upper bounds, yet implies the existence of the Turing Jump, thus realizing the
only known lower bound for the full Finite Sums Theorem. This is the first example of
this kind. In fact we isolate a rich family of similar restrictions of Hindman's 
Theorem with analogous properties. 
\end{abstract}



\section{Introduction and Motivation}
The following question was asked by Hindman, Leader and Strauss in \cite{Hin-Lea-Str:03}:

\begin{quote}
{\em Question 12}. Is there a proof that whenever $\Nat$ is finitely coloured there is a sequence
$x_1, x_2, \dots$ such that all $x_i$ and all $x_i + x_j$ ($i\neq j$) 
have the same colour, that does not also prove the Finite Sums Theorem?
\end{quote}
The theorem referred to as the Finite Sums Theorem is the famous result of Hindman's
(the original proof is in \cite{Hin:74}) stating that whenever $\Nat$ is finitely coloured there is a sequence
$x_1, x_2, \dots$ such that all finite non-empty sums of distinct elements from the sequence
have the same colour. We will sometimes refer to this statement as Hindman's Theorem, or 
the full Hindman's Theorem.

In this paper we present some results that are related to Question 12 above. We isolate a rich 
family $\mathcal{F}$ of natural restrictions of the Finite Sums Theorem with the following two properties:

\begin{enumerate}
\item Each member of the family $\mathcal{F}$ admits a simple combinatorial proof that does not establish
Hindman's Theorem, but
\item Each member of a non-trivial sub-family of $\mathcal{F}$ 
is strong in the sense of having the same computability-theoretic lower bounds 
that are known to hold for Hindman's Theorem.
\end{enumerate}

The simplicity of the proof referred to in point (1) above is evident in the sense that all members of 
$\mathcal{F}$ admit a proof consisting in a finite iteration of the Infinite Ramsey's Theorem 
and an application of some classical theorem from Finite Combinatorics. Yet, much more detailed information
can be obtained by using the tools of Computability Theory and Reverse Mathematics, the areas
where the lower bound mentioned in point (2) above come from. 

The strength of the Finite Sums Theorem is indeed a major open problem in these areas (see \cite{Mon:11:open}, Question 9).
A huge gap remains between the known lower and upper bounds on the computability-theoretic 
and proof-theoretic strength of Hindman's Theorem \cite{Hin:74}. 
Blass, Hirst and Simpson in~\cite{Bla-Hir-Sim:87} established the 
following lower and upper bounds thirty years ago:

\begin{enumerate}
\item There exists a computable coloring $c:\Nat\to 2$ such that any solution to 
Hindman's Theorem for $c$ computes $\emptyset'$, the first Turing Jump of the computable sets. 

\item For every computable coloring $c:\Nat\to 2$ there exists a solution set
computable from $\emptyset^{(\omega)}$, the $\omega$-th Turing Jump of the computable sets.
\end{enumerate}

By a ``solution to Hindman's Theorem for coloring $c$'' we mean an infinite set $H$ 
such that all finite non-empty sums of elements from $H$ have the same $c$-color.
As often is the case, the above computability-theoretic results have direct 
corollaries in Reverse Mathematics (see \cite{Sim:SOSOA,Hir:STT:14} for excellent introductions
to the topic). 
Letting $\HT$ denote the natural formalization of Hindman's Finite Sums Theorem
in the language of arithmetic, the only known upper and lower bounds on the logical 
strength of the full Finite Sums Theorem are the following (again from \cite{Bla-Hir-Sim:87}):
$$ \ACA_0^+ \geq \HT \geq \ACA_0.$$
Recall that $\ACA_0$ is equivalent to $\RCA_0+\forall X \exists Y (Y=X')$ and that
$\ACA_0^+$ is equivalent to $\RCA_0 + \forall X \exists Y (Y = X^{(\omega)})$. 
Note that restricting the consideration to $2$-colorings is inessential, since
$\HT_2$ already implies $\ACA_0$.

Recently there has been some interest in the computability-theoretic and proof-theoretic
strength of restrictions of Hindman's Theorem
(see~\cite{Hir:12:HvsH,DJSW:16,Car:16:weak}). While~\cite{Hir:12:HvsH} deals with
a restriction on the sequence of finite sets in the Finite Unions formulation of Hindman's 
Theorem, both~\cite{DJSW:16} and~\cite{Car:16:weak} deal with restrictions on the types of
sums that are guaranteed to be colored the same color. 

Blass conjectured in~\cite{Bla:05} that the complexity of Hindman's Theorem might 
grow with the length of the sums for which homogeneity is guaranteed. Let us denote
by $\HT^{\leq n}_r$ the restriction of the Finite Sums Theorem to colorings with 
$r$ colors and sums of at most $n$ terms. The conjecture discussed in \cite{Bla:05} is then
that the complexity of $\HT^{\leq n}_r$ is growing with $n$.  

The main result in~\cite{DJSW:16} is that the above described $\emptyset'$ lower bound known 
to hold for the full Hindman's Theorem already applies to its restriction to $4$ colors and to 
sums of at most $3$ terms ($\HT^{\leq 3}_4$ in the notation introduced above). 
On the other hand, note that no upper bound other than 
the upper bound for full Hindman's Theorem is known to hold for this restricted version, 
and the same is true for $\HT^{\leq 2}_2$, the restriction to sums of at most $2$ terms! 
This is obviously related to Question 12 of~\cite{Hin-Lea-Str:03} quoted above. 

On the other hand, the variants studied by Hirst in~\cite{Hir:12:HvsH} (called
Hilbert's Theorem) and by the author 
in~\cite{Car:16:weak} (called the Adjacent Hindman's Theorem) do admit simple proofs, 
but are very weak and provably fall short 
of hitting the known lower bounds for the full Hindman's Theorem 
(they are provable, respectively, from the Infinite Pigeonhole
Principle and from Ramsey's Theorem for pairs).

By contrast, the family of natural restriction of the Finite Sums Theorems
introduced in the present paper has members that are ``weak'' in the sense of admitting easy proofs 
yet ``strong'' with respect to computability-theoretic and proof-theoretic lower bounds. 

In terms of Computability Theory and Reverse Mathematics, the properties of our
family of restrictions of the Finite Sums Theorem are summarized as follows: 
{\em All} members of the family have upper bounds in the Arithmetical Hierarchy 
for computable instances and proofs in $\ACA_0$. Yet {\em many} members of such family imply 
the existence of the Turing Jump. In terms of Reverse Mathematics, they imply $\ACA_0$.

\section{A family of restrictions of the Finite Sums Theorem}

The present section is organized as follows. We first formulate, in section \ref{sub:hinbrau},
a particular restriction of 
the Finite Sums Theorem, called the Hindman-Brauer Theorem, and prove it by a simple finite
iteration of the Infinite Ramsey's Theorem plus finitary combinatorial tools (Theorem \ref{thm:hinbrau}). 
The argument is indeed general and in section \ref{sub:family} we describe the family of statements that can be proved
by exactly the same proof. This proof is obviously much simpler than any known proof of the Finite 
Sums Theorem and does not establish the latter. 
In the last subsection (Section \ref{sub:uppbds}) 
we extract computability-theoretic and proof-theoretic upper bounds for each member of the family.

\subsection{An example: the Hindman-Brauer Theorem}\label{sub:hinbrau}
We start with a particular example. We will use the following theorem, due to Alfred Brauer~\cite{Bra:28}, 
which is a joint strengthening of Van Der Waerden's~\cite{VdW:27}
and Schur's~\cite{Sch:16} theorems.

\begin{theorem}[Brauer's Theorem, \cite{Bra:28}]
For all $r,\ell,s\geq 1$ there exists $n=n(r,\ell,s)$ such that
if $g:[1,n]\to r$ then there exists $a,b>0$ such that
$\{a,a+b,a+2b,\dots,a+ (\ell-1)b\}\cup \{sb\}\subseteq [1,n]$ is monochromatic.
\end{theorem}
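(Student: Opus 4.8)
The plan is to deduce Brauer's Theorem from van der Waerden's Theorem, which I take as known in the form: for all $r,k\ge 1$ there is a number $W(r,k)$ such that every colouring $h\colon[1,W(r,k)]\to r$ admits a monochromatic $k$-term arithmetic progression. Brauer's configuration $\{a,a+b,\dots,a+(\ell-1)b\}\cup\{sb\}$ is just an $\ell$-term progression together with one extra point, the \emph{dilated difference} $sb$; all of the difficulty lies in forcing this last point to share the colour of the progression.

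First I would apply van der Waerden's Theorem to the given colouring $g$ to extract a long monochromatic progression $P=\{f+jd : 0\le j< L\}$ of some colour $\chi$, with $L$ large relative to $\ell$ and $s$. The naive hope is to read the whole Brauer configuration off inside $P$. Choosing the difference $b$ to be a multiple $td$ of the common difference $d$, and choosing $a$ to be a term of $P$, keeps all of $a,a+b,\dots,a+(\ell-1)b$ inside $P$, so these $\ell$ points automatically receive colour $\chi$; this part is immediate once $L\ge (\ell-1)t+1$.

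The obstacle is the point $sb$. With $b=td$ we have $sb=std\equiv 0 \pmod d$, whereas every element of $P$ is $\equiv f \pmod d$; hence $sb\in P$ already forces $d\mid f$, that is, the monochromatic progression would have to begin at a multiple of its own common difference. Equivalently one is asking for a monochromatic homothetic copy of an initial block $d\cdot\{c,c+1,\dots,c+k-1\}$, and van der Waerden's Theorem gives no control over $f \bmod d$. Worse, this cannot be repaired by descending to a sub-progression: a sub-progression of $P$ has difference $me$ and first term $f+je$, and exactly the same divisibility $e\mid f$ is required. So any argument that merely carves the configuration out of a single monochromatic progression is doomed, and locating $sb$ is genuinely the crux of the theorem.

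To get past this I would run a colour-focusing argument, in the spirit of van der Waerden's own proof, that constructs the progression and the dilated point \emph{together} rather than extracting them afterwards. One maintains, for a growing length parameter, several monochromatic progressions of pairwise distinct colours that share a common focus, the whole packet arranged so that closing the focus up produces, in a single colour, both an $\ell$-term progression and its dilated difference $sb$; once more than $r$ colours have been focused, two must coincide and the configuration closes. The delicate point is the bookkeeping that keeps $sb$ aligned with the focus throughout the induction, alongside the usual growth in the number of colours (managed by a double induction on the length $\ell$ and on $r$). I expect this synchronisation, rather than any single finitary estimate, to be the main difficulty, with van der Waerden's Theorem serving only as the combinatorial engine behind the focusing.
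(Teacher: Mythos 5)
The paper does not prove this statement; it cites it to Brauer's 1928 article and uses it as a black box, so there is no internal proof to compare against. Judged on its own terms, your proposal has a genuine gap: it ends exactly where the proof has to begin. Your first two paragraphs are a correct and useful diagnosis --- a single application of van der Waerden's Theorem cannot place $sb$ in the same colour class as the progression, and no passage to a sub-progression fixes this --- but the third paragraph replaces the proof by an announcement of one. ``Run a colour-focusing argument that keeps $sb$ aligned with the focus'' is precisely the content of the theorem, and you yourself flag the synchronisation as the open difficulty rather than resolving it. As written, nothing has been proved beyond van der Waerden's Theorem itself.

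The missing idea is also not the one you reach for: the standard proof does not re-enter van der Waerden's focusing machinery at all, but runs an \emph{induction on the number of colours $r$}, using van der Waerden only as a black box. For the inductive step, let $m=n(r-1,\ell,s)$ and take $n=W\bigl(r,\,sm(\ell-1)+1\bigr)$. Given $g:[1,n]\to r$, extract a monochromatic progression $f, f+d,\dots,f+sm(\ell-1)d$ of colour $\chi$. If some $j\in[1,m]$ has $g(sjd)=\chi$, set $a=f$, $b=jd$: the progression $a,a+b,\dots,a+(\ell-1)b$ lies inside the long one (since $(\ell-1)j\le (\ell-1)m$) and $sb=sjd$ has colour $\chi$, so you are done. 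Otherwise $h(j)=g(sjd)$ is an $(r-1)$-colouring of $[1,m]$; the induction hypothesis gives $a',b'$ with $\{a',a'+b',\dots,a'+(\ell-1)b'\}\cup\{sb'\}$ $h$-monochromatic, and since the dilation $j\mapsto sjd$ carries this configuration to the configuration with $a=sa'd$, $b=sb'd$ (note $a+ib=s(a'+ib')d$ and $sb=s(sb')d$), the latter is $g$-monochromatic. The length $sm(\ell-1)+1$ is chosen so that both the progression fits and $smd\le n$. This is the step your sketch is missing: the observation that a global failure to colour the points $sjd$ with $\chi$ itself yields a colouring with fewer colours on which the configuration's structure is preserved, which is what lets the induction close.
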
 

Let $B:\Nat^3\to \Nat$ denote the witnessing function for Brauer's Theorem. For $n=2^{t_1}+\dots + 2^{t_k}$ with $t_1 < \dots < t_k$
let $\lambda(n)=t_1$ and $\mu(n)=t_k$. The following Apartness Condition is crucial in what follows. 

\begin{definition}[Apartness Condition]
We say that a set $X$ satisfies the Apartness Condition (or {\em is apart}) if for all $x,x'\in X$, 
if $x<x'$ then $\mu(x)<\lambda(x')$. 
\end{definition}
Note that the Apartness Condition is inherited by subsets.
If $a$ is a positive integer and $B$ is a set we denote by $FS^{=a}(B)$ (resp. $FS^{\leq a}(B)$)
the set of sums of exactly (resp.~at most) $a$ distinct elements from $B$. 
More generally, if $A$ and $B$ are sets we denote by $FS^{ A}(B)$ the set of all sums of $j$-many distinct terms 
from $B$, for all $j\in A$. Thus, e.g., $FS^{ \{1,2,3\}}(B)$ is another name for $FS^{\leq 3}(B)$.
By $FS(B)$ we denote $FS^\Nat(B)$, the set of all non-empty finite sums of distinct elements of $B$.
By $\RT^n_r$ we denote the Infinite Ramsey's Theorem for $r$-colorings of $n$-tuples.

\begin{theorem}[Hindman-Brauer Theorem]\label{thm:hinbrau}
For all $c:\Nat \to 2$ there exists and infinite and apart set
$H\subseteq\Nat$ such that for some $a,b>0$ the set
$FS^{\{a,a+b,a+2b\}\cup\{b\}}(H)$ is monochromatic.
\end{theorem}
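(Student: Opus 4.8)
The plan is to reduce the infinitary statement to a finite coloring problem that Brauer's Theorem can handle, using a finite iteration of Ramsey's Theorem to stabilize the color on the relevant classes of sums. The key observation is that the set of exponents $A = \{a, a+b, a+2b\} \cup \{b\}$ appearing in $FS^{A}(H)$ contains sums of at most four distinct lengths, so the number of terms we ever add together is bounded. This is exactly what makes the iterated-Ramsey approach feasible: we only need to control finitely many ``layers'' of sums.

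First I would fix, once and for all, the maximum arity involved. Since we seek parameters with $a,a+2b$ as the longest sum and $b$ as a possibly short one, the crucial point is that for \emph{any} choice of $a,b$, the quantity $a+2b$ could be arbitrarily large, so I cannot naively bound the arity in advance. The correct route is therefore to work the other way: I would first thin $\Nat$ to an infinite apart set $B_0$ on which the coloring $c$ behaves uniformly on sums \emph{of each fixed length}, and only afterwards invoke Brauer. Concretely, for each fixed $j$, the map sending a $j$-element subset $\{y_1 < \dots < y_j\}$ of an apart set to $c(y_1 + \dots + y_j)$ is a finite coloring of $j$-tuples; applying $\RT^j_2$ yields an infinite homogeneous subset on which the color of every $j$-term sum depends only on $j$. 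The apartness condition is what guarantees that distinct subsets give distinct sums and that the sum-coloring is well defined as a coloring of tuples.

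The heart of the argument is to iterate this stabilization across all the lengths that can occur, then encode the residual freedom into a finite coloring to which Brauer applies. The subtle part is that the lengths $a, a+b, a+2b, b$ are not fixed in advance—they are what we are solving for. The resolution is to define an auxiliary finite coloring $g$ on an interval $[1,n]$ by letting $g(\ell)$ record the stabilized color of an $\ell$-term sum drawn from a suitable fixed apart tail, and then apply Brauer's Theorem to $g$ with parameters $r = 2$, the arithmetic-progression length (here $3$), and multiplier $s$ (here reflecting the single extra term $b$). Brauer then returns $a,b>0$ with $\{a, a+b, a+2b, sb\}$ monochromatic under $g$, i.e.\ all four arities receive the same stabilized color. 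Pulling this back, every sum in $FS^{\{a,a+b,a+2b\}\cup\{b\}}(H)$ gets that one color, as required.

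I expect the main obstacle to be organizing the iteration of Ramsey's Theorem so that a \emph{single} infinite apart set $H$ simultaneously stabilizes the color on sums of all the finitely many relevant arities, while keeping the bookkeeping compatible with the fact that those arities are only revealed after Brauer is applied. The clean way to handle this is to first apply Brauer to obtain the candidate set of arities $\{a,a+b,a+2b,b\}$ as a purely finitary step, and \emph{then} perform a finite nested sequence of applications of $\RT^j_2$—one for each $j$ in that finite set—to extract the infinite apart $H$ on which all those sum-lengths are monochromatic of the common color. The apartness condition must be verified to survive the thinning (it does, since it is inherited by subsets) and must be used to ensure that the sum maps are genuine tuple-colorings with no collisions; checking this compatibility is the only genuinely delicate point, and everything else is routine.
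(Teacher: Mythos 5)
Your overall strategy is the paper's: thin $\Nat$ to an infinite apart set on which the color of a $j$-term sum depends only on $j$, read off the induced $2$-coloring of the possible lengths, and apply the finite Brauer Theorem to that coloring. Your third paragraph states this in the correct order, and with $n=B(2,3,1)$ fixed in advance it works; your worry that ``$a+2b$ could be arbitrarily large'' is a non-issue precisely because Brauer guarantees the configuration inside the precomputed interval $[1,B(2,3,1)]$.

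However, your final paragraph, where you claim to resolve the ``main obstacle,'' inverts the two steps in a way that does not work. You cannot ``first apply Brauer to obtain the candidate set of arities $\{a,a+b,a+2b,b\}$ as a purely finitary step'': Brauer's Theorem takes a coloring of $[1,n]$ as \emph{input}, and the relevant coloring $C(\ell)=$ (the stabilized color of $\ell$-term sums) only exists after the Ramsey iteration has been carried out for every $\ell\leq n$. Moreover, even if you fixed some $a,b$ in advance and then ran $\RT^j_2$ only for $j\in\{a,b,a+b,a+2b\}$, each of those four arities would become monochromatic but possibly in \emph{different} colors; there is nothing to force the ``common color'' you invoke. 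The order must be: fix $k=B(2,3,1)$; run the nested Ramsey iteration for all arities $1,\dots,k$ to obtain a single infinite apart $H_k$ with $FS^{=i}(H_k)$ monochromatic of some color $c_i$ for each $i\leq k$; and only then apply Brauer to the coloring $i\mapsto c_i$ of $[1,k]$ to locate $a,b$ with $c_a=c_b=c_{a+b}=c_{a+2b}$. With that correction (which your third paragraph already essentially contains), the argument is exactly the paper's.
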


\begin{proof}
Let $c:\Nat \to 2$ be given. Let $k=B(2,3,1)$. Consider the following construction.

Let $H_0$ be an infinite (computable) set satisfying the Apartness
Condition, e.g. $\{2^t \,:\, t\in\Nat\}$.

Let $H_1\subseteq H_0$ be an infinite homogeneous set for $c$, witnessing $\RT^1_2$ relative 
to $H_0$

Let $f_2 : [\Nat]^2\to 2$ be defined as $f(x,y) = c(x+y)$. Let $H_2\subseteq H_1$
be an infinite homogeneous set for $f_2$, witnessing $\RT^2_2$ relative to $H_1$. 

Let $f_3 : [\Nat]^2\to 2$ be defined as $f(x,y,z) = c(x+y+z)$. Let $H_3\subseteq H_2$
be an infinite homogeneous set for $f_3$, witnessing $\RT^3_2$ relative to $H_2$.

We continue in this fashion for $k$ steps. This determines a finite sequence of infinite sets
$H_0,H_1,\dots,H_k$ such that
$$ H_0 \supseteq H_1 \supseteq H_2 \supseteq \dots \supseteq H_k.$$
Each $H_i$ satisfies the Apartness Condition. 
Furthermore, for each $i\in [1,k]$ we have that $FS^{=i}(H_j)$ is monochromatic under $c$ for all $j\in [i,k]$.
Also, $FS^{=i}(H_k)$ is monochromatic for each $i\in [1,k]$. 
Let $c_i$ be the color of $FS^{=i}(H_k)$ under $c$. 

The construction can be seen as defining a coloring
$C:[1,k]\to 2$, setting $C(i)=c_i$. Since $k=B(2,3,1)$, by Brauer's Theorem there exists $a,b>0$ in $[1,k]$ such that
$\{a,a+b,a+2b\}\cup\{b\}\subseteq [1,k]$ is monochromatic for $C$. Let $i<2$ be the color. 
Then $FS^{\{a, a+b, a+2b\}\cup\{b\}}(H_k)$ 
is monochromatic of color $i$ for the original coloring $c$.
\end{proof}

A comment is in order: the above proof is obviously {\em much simpler} than any known proof establishing Hindman's Theorem. 
It is also obvious that it does not establish the full Finite Sums Theorem. 
Note that nothing in the above construction is special about $2$ colors and $3$-terms
arithmetic progressions. 

\subsection{A family of restrictions of the Finite Sums Theorem admitting simple proofs}\label{sub:family}

The proof of Theorem \ref{thm:hinbrau}
is easily adapted to arbitrary values $r$ for number of colors and
$\ell$ for the length of the arithmetic progression. More importantly
one can substitute Brauer's Theorem by {\em virtually any} theorem about finite colorings 
of numbers from the literature 
(Schur's Theorem \cite{Sch:16}, Van der Waerden's Theorem \cite{VdW:27}, Folkman's Theorem \cite{San:68,Gra-Ron-Rot-Spe:80}, 
just to name a few), 
yielding a rich family of Hindman-type theorems. 

The general form of the restrictions of the Finite Sums Theorem obtained by the proof of Theorem \ref{thm:hinbrau}
is the following:
\begin{quote}
For all $c:\Nat\to r$ there exists an infinite and apart $H\subseteq\Nat$ and there exists a finite $A$, 
satisfying some specific conditions, such that $FS^{ A}(H)$ is monochromatic. 
\end{quote}
For each set $A\subseteq\Nat$ and positive integer $r>0$, we let $\HT^{ A}_r$ denote such a statement.
We describe the family by presenting a list of some of its typical members, grouped
by sub-families. The general pattern will be clear enough.

\begin{center}
{\bf Schur Family}: 
\end{center}
For each positive integer $r$ let $\HT^{\{a,b,a+b\}}_r$ denote the following statement.
\begin{quote}
Whenever $\Nat$ is colored in $r$ colors there is an infinite and apart set $X=\{x_1,x_2,\dots\}_<$
and positive integers $a,b$ such that all elements
of $FS^{\{a,b,a+b\}}(\{x_1,x_2,\dots\})$ have the same color. 
\end{quote}

\begin{center}
{\bf Van der Waerden Family}:
\end{center}
For each pair of positive integers $r,\ell$ let $\HT^{\{a,b,a+b,\dots,a+(\ell-1)b\}}_r$ denote
the following statement.
\begin{quote}
Whenever $\Nat$ is colored in $r$ colors there is an infinite and apart set $X=\{x_1,x_2,\dots\}_<$
and positive integers $a,b$ such that all elements
of $FS^{\{a,a+b,a+2b,\dots,a+(\ell-1)b\}}(\{x_1,x_2,\dots\})$ have the same color. 
\end{quote}

\begin{center}
{\bf Brauer Family}: 
\end{center}
For each pair of positive integers $r,\ell$, let $\HT^{\{a,b,a+b,\dots,a+(\ell-1)b\}\cup\{b\}}_r$ denote
the following statement.
\begin{quote}
Whenever $\Nat$ is colored in $r$ colors there is an infinite and apart set $X=\{x_1,x_2,\dots\}_<$
and positive integers $a,b$ such that all elements
of $FS^{\{a,a+b,a+2b,\dots,a+(\ell-1)b\}\cup\{b\}}(\{x_1,x_2,\dots\})$ have the same color. 
\end{quote}

\begin{center}
{\bf Folkman Family}:
\end{center}
For each pair of positive integers $r,\ell$, let $\HT^{ FS(\{i_1,\dots,i_\ell\})}_r$ denote
the following statement.
\begin{quote}
Whenever $\Nat$ is colored in $r$ colors there is an infinite and apart set $X=\{x_1,x_2,\dots\}_<$
and positive integers $i_1,\dots,i_\ell$ such that all elements
of $FS^{ FS(\{i_1,\dots,i_\ell\})}(\{x_1,x_2,\dots\})$ have the same color. 
\end{quote}

\subsection{Computability-theoretic and proof-theoretic upper bounds}

The observable simplicity of the proof of Theorem \ref{thm:hinbrau} can be measured by extracting 
from it computability-theoretic and proof-theoretic upper bounds. From the finite iteration argument
given above one can glean upper bounds that are better than the known upper bounds for the 
full Finite Sums Theorem. 

To assess the Computability and Reverse Mathematics corollaries, it may be
convenient to reformulate the general argument of Theorem \ref{thm:hinbrau} as follows
(again, we only give the details for the case of the Hindman-Brauer Theorem):

\begin{proof}[Second proof of Theorem \ref{thm:hinbrau}]
Let $n$ be a positive integer. Given $c:\Nat\to 2$ let $g_n:[\Nat]^n\to 2^n$ be defined as follows: 
$$ g_n(x_1,\dots,x_n) = \langle c(x_1),c(x_1+x_2),\dots,c(x_1+\dots+x_n)\rangle.$$
Fix an infinite and apart set $H_0$ of positive integers. 
By $\RT^n_{2^n}$ relativized to $H_0$ we get an infinite apart 
set $H$ monochromatic for $g_n$.
Let the color be $\sigma=(c_1,\dots,c_n)$, a binary sequence of length $n$. 
Then, for each $i\in [1,n]$, $g_n$ restricted to $FS^{=i}(H)$ is monochromatic 
of color $c_i$. The sequence $\sigma$ is a coloring of $n$ in $2$ colors. 
If $n=B(2,3,1)$ then, by the finite Brauer's Theorem, 
there exists $a,b>0$ in such that $\{a,a+b,a+2b\}\cup \{b\}\subseteq [1,n]$ and 
$$c_a = c_b = c_{a+b} = c_{a+2b}.$$
Then $FS^{ \{a,a+b,a+2b,b\}}(H)$ is monochromatic of color $c_a$.
\end{proof}

The above argument 
shows that $\RT^{B(2,3,1)}_{2^{B(2,3,1)}}$ implies the Hindman-Brauer Theorem $\HT^{\{a,a+b,a+2b\}\cup\{b\}}_2$. 
The difference from the previously given argument is that we have only used
one instance of Ramsey's Theorem, albeit for a larger number of colours. 

We can then quote the following classical results of Jockusch's about upper bounds 
on the computability-theoretic content of Ramsey's Theorem (see \cite{Joc:72}). 

\begin{theorem}[Jockusch, \cite{Joc:72}]\label{joc1}
Every computable $f:[\Nat]^n\to r$ has an infinite $\Pi^0_n$ homogeneous set.
\end{theorem}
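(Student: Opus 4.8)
The plan is to argue by induction on the dimension $n$, using the classical device of pre-homogeneous sets to reduce the $(n+1)$-dimensional case to the $n$-dimensional one at the cost of exactly one Turing jump. To make the relativization in the inductive step legitimate, I would state and prove the induction hypothesis relative to an arbitrary oracle.

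For the base case $n=1$, given a computable $f:\Nat\to r$, the pigeonhole principle guarantees that some color class $f^{-1}(i)$ is infinite; each color class is computable, hence $\Delta^0_1$ and in particular $\Pi^0_1$, and an infinite color class is exactly an infinite homogeneous set. For the inductive step, suppose the claim holds in dimension $n$ relative to every oracle, and let $f:[\Nat]^{n+1}\to r$ be computable. Call a set $P$ \emph{pre-homogeneous} for $f$ if the color $f(x_1,\dots,x_n,y)$ of any $(n+1)$-tuple drawn from $P$ depends only on the first $n$ coordinates $x_1<\dots<x_n$ and not on the choice of the larger element $y\in P$. First I would construct an infinite pre-homogeneous set $P$ with $P\leq_T\emptyset'$. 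This is the Erd\H{o}s--Rado tree construction: one forms the computable, finitely branching tree of increasing sequences that are ``pre-homogeneous so far,'' checks that it is infinite, and extracts an infinite path of low degree (in particular $\leq_T\emptyset'$) by a basis theorem for such trees.

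Given such a $P$, I would define the induced coloring $g:[P]^n\to r$ by setting $g(x_1,\dots,x_n)=f(x_1,\dots,x_n,y)$ for any $y\in P$ with $y>x_n$; pre-homogeneity makes this well defined, and $g$ is computable in $P$, hence $g\leq_T\emptyset'$. Applying the induction hypothesis relativized to $\emptyset'$, the coloring $g$ has an infinite homogeneous set $H$ that is $\Pi^0_n$ relative to $\emptyset'$, and since $\Pi^0_n(\emptyset')=\Pi^0_{n+1}$, the set $H$ is $\Pi^0_{n+1}$. Finally $H\subseteq P$ is homogeneous for $f$ itself, because any $(n+1)$-tuple from $H$ receives the color that $g$ assigns to its initial $n$-tuple, and $g$ is constant on $[H]^n$.

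I expect the main obstacle to be the dimension-reduction step, namely producing the pre-homogeneous set $P$ at the correct complexity. The naive approach thins a nested sequence of infinite reservoirs, at each stage choosing an infinite monochromatic sub-reservoir for each of the finitely many relevant $n$-subsets; but deciding which sub-reservoir is infinite is a $\Pi^0_2$ predicate, which would cost a second jump and overshoot the bound. The key point is therefore to realize the whole construction as the search for an infinite path through a computable, finitely branching tree and to invoke a basis theorem, keeping $P$ within $\Delta^0_2$. The remaining bookkeeping---well-definedness of $g$, the hierarchy identity $\Pi^0_n(\emptyset')=\Pi^0_{n+1}$, and the transfer of homogeneity from $g$ back to $f$---is routine.
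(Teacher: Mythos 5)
The paper does not prove this statement at all---it is imported verbatim from Jockusch's 1972 paper as a black box---so the only meaningful comparison is with the standard proofs in the literature, and against those your argument has a genuine gap at exactly the step you flagged as the ``main obstacle.'' The claim that the Erd\H{o}s--Rado tree has an infinite path $P\leq_T\emptyset'$ obtainable ``by a basis theorem for such trees'' is false. That tree is computable and finitely branching, but it is \emph{not} computably bounded: its children at a node are labelled by natural numbers, and there is no computable bound on how large the least number realizing a given color-type over a node can be (nor a computable way to certify that all children of a node have been found). The low basis theorem applies to $\Pi^0_1$ classes, i.e.\ to computably \emph{bounded} computable trees; here the set of paths is only a $\Pi^0_1(\emptyset')$ class, so the basis theorem relativized to $\emptyset'$ gives a path with $P'\leq_T\emptyset''$, one jump worse than you claim. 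This is not a repairable bookkeeping slip: if every computable $f:[\Nat]^2\to 2$ had an infinite pre-homogeneous set $P\leq_T\emptyset'$, then your own dimension-reduction (the induced coloring $g\leq_T\emptyset'$ of singletons, plus an infinite color class) would produce an infinite homogeneous set computable in $\emptyset'$, hence $\Delta^0_2\subseteq\Sigma^0_2$; but Jockusch also proved (Theorem 3.1 of the same paper) that some computable $2$-coloring of pairs has \emph{no} infinite $\Sigma^0_2$ homogeneous set. So the intermediate statement you rely on is refuted by the very paper you are proving a theorem from.

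With the corrected complexity ($P$ low over $\emptyset'$, so $P'\leq_T\emptyset''$), your induction is sound but proves a different theorem: it yields an infinite homogeneous set $H$ with $H'\leq_T\emptyset^{(n)}$, which is precisely Theorem~\ref{joc2} of the paper and gives only the bound $H\in\Delta^0_{n+1}$, strictly weaker than the $\Pi^0_n$ bound asserted in Theorem~\ref{joc1} (which is optimal, again by Jockusch's Theorem 3.1). The $\Pi^0_n$ result cannot be reached by ``path through a tree, then induced coloring'': Jockusch's actual argument for it builds the homogeneous set directly as a set co-c.e.\ in $\emptyset^{(n-1)}$ (i.e.\ $\Pi^0_1(\emptyset^{(n-1)})=\Pi^0_n$) via a movable-marker/retraceability construction, controlling the complexity of the final set itself rather than of an auxiliary pre-homogeneous reservoir. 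You would need to replace your inductive step with an argument of that kind to obtain the stated theorem.
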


\begin{theorem}[Jockusch, \cite{Joc:72}]\label{joc2}
Every computable $f:[\Nat]^n\to r$ has an infinite homogeneous set $H$ such that $H'\leq_T \emptyset^{(n)}$.
\end{theorem}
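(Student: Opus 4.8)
The plan is to prove the statement by induction on $n$, in the relativized form: for every oracle $X$ and every $X$-computable $f:[\Nat]^n\to r$ there is an infinite homogeneous set $H$ with $(H\oplus X)'\le_T X^{(n)}$. The engine of the induction is the classical device of \emph{pre-homogeneous sets}: a set $P$ is pre-homogeneous for $f:[\Nat]^n\to r$ if the color of an increasing tuple $x_1<\dots<x_n$ from $P$ depends only on $x_1,\dots,x_{n-1}$. On such a $P$ one reads off an induced coloring $g$ of the $(n-1)$-subsets of $P$ (the common color of the one-point extensions), and any $H\subseteq P$ homogeneous for $g$ is homogeneous for $f$. This drops the arity by one, which is what drives the recursion.

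The base case $n=1$ is immediate: the color classes of an $X$-computable $f:[\Nat]^1\to r$ are $X$-computable, at least one is infinite, and taking it as $H$ gives $(H\oplus X)'\equiv_T X'=X^{(1)}$.

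For the inductive step I would first isolate the key lemma: \emph{every $X$-computable $f:[\Nat]^n\to r$ has an infinite pre-homogeneous set $P$ that is $\mathrm{low}_2$ over $X$}, so in particular $(P\oplus X)'\le_T X''$. Granting this, set $Y=P\oplus X$. The induced coloring $g$ on the $(n-1)$-subsets of $P$ is $Y$-computable (to evaluate $g$ on $\bar a$, search $P$ for the least element above $\max\bar a$ and apply $f$), so the inductive hypothesis relative to $Y$ yields an infinite $H\subseteq P$ homogeneous for $g$, hence for $f$, with $(H\oplus Y)'\le_T Y^{(n-1)}$. Since $Y'\le_T X''=X^{(2)}$, iterating the jump gives $Y^{(n-1)}=(Y')^{(n-2)}\le_T (X^{(2)})^{(n-2)}=X^{(n)}$, and because $X\le_T Y$ we conclude $(H\oplus X)'\le_T(H\oplus Y)'\le_T X^{(n)}$, as required. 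Specializing to $X=\emptyset$ then gives the stated bound $H'\le_T\emptyset^{(n)}$.

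The hard part is the key lemma, and specifically controlling the \emph{jump} of $P$ rather than $P$ itself. Building $P$ greedily with a shrinking reservoir of candidates is routine, but each step must thin the reservoir so that the finitely many one-dimensional colorings $f(\bar a,\cdot)$ (for $\bar a$ ranging over the $(n-1)$-subsets of the part built so far) become constant, which means choosing an infinitely recurring color — a $\Pi^0_2$-over-$X$ question. Answering all of these outright only yields $P\le_T X''$ and hence the far too weak bound $P'\le_T X'''$. To obtain the $\mathrm{low}_2$ bound $(P\oplus X)''\le_T X''$ one must instead run this as a cohesiveness-style construction, in which $P$ is itself complex but its jump is approximated in the limit; this is exactly the point requiring a genuine $\mathrm{low}_2$ argument (priority, or Mathias forcing with $\Pi^{0,X}_1$ conditions), and it is the technical heart of the whole proof. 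Note that this cohesiveness step is uniform in $n$, since the reservoir is always thinned along a one-dimensional family, so the entire jump cost of the recursion is concentrated in the $(n-1)$-fold iteration handled by the induction.
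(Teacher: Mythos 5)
This statement is quoted by the paper from Jockusch's 1972 article and is not proved in the text, so there is no in-paper proof to compare against; your outline is, in substance, the classical argument --- induction on the exponent through pre-homogeneous sets, with all the work concentrated in the pre-homogenization lemma --- and the bookkeeping $(H\oplus Y)'\le_T Y^{(n-1)}\le_T (X^{(2)})^{(n-2)}=X^{(n)}$ is correct, as are the base case and the transfer of the induced coloring to the inductive hypothesis. The one place you go astray --- not into error, but into substantial overkill --- is the key lemma. Your induction only ever uses the first-jump bound $(P\oplus X)'\le_T X''$, yet you insist on deriving it from $P$ being $\mathrm{low}_2$ over $X$ and declare that a ``genuine $\mathrm{low}_2$ argument'' (priority, first-jump control) is the technical heart. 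Controlling the \emph{double} jump of a cohesive or pre-homogeneous set is the Cholak--Jockusch--Slaman theorem, which is considerably harder than, and some thirty years later than, the statement being proved. The bound you actually need comes from the ``routine'' greedy construction you dismiss, once jump control is interleaved into it: work with Mathias-style conditions $(F,R)$ where $R$ stays infinite and uniformly $X$-computable, and alternate (i) thinning steps, where selecting an infinitely recurring color among the finitely many unary colorings $f(\bar a,\cdot)$ is a $\Pi^0_2(X)$ question that $X''$ answers outright, with (ii) jump-control steps, where ``is there a finite $E\subseteq R$ with $\max F<\min E$ and $\Phi_e^{F\cup E}(e)\downarrow$'' is $\Sigma^0_1(X)$; a positive answer is acted on (trimming the reservoir above the use), and a negative answer permanently forces $e\notin(P\oplus X)'$. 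The whole construction is computable in $X''$ and decides $(P\oplus X)'$ as it proceeds, so $P\le_T X''$ and $(P\oplus X)'\le_T X''$ hold simultaneously, with no limit approximation or double-jump control anywhere. With the lemma in that form your induction closes exactly as written.
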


Then we have the following proposition as an immediate corollary, where $\leq_T$ denotes 
Turing reducibility.

\begin{proposition}
Every computable $c:\Nat\to 2$ has an infinite and apart set $H$ such that 
for some $a,b>0$ the set $FS^{\{a,b,a+b,a+2b\}}(H)$ is monochromatic
and such that $H' \leq_T \emptyset^{(B(2,3,1))}$.
\end{proposition}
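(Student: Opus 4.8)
The plan is to obtain the conclusion directly from the second proof of Theorem \ref{thm:hinbrau} together with Jockusch's upper bound from Theorem \ref{joc2}. Recall that the second proof reduced the Hindman-Brauer Theorem to a single application of $\RT^{B(2,3,1)}_{2^{B(2,3,1)}}$: given $c:\Nat\to 2$ one forms the derived coloring $g_n:[\Nat]^n\to 2^n$ for $n=B(2,3,1)$, and any set homogeneous for $g_n$ yields the desired apart set $H$ with $FS^{\{a,b,a+b,a+2b\}}(H)$ monochromatic. The only additional point to verify is the complexity bound $H'\leq_T \emptyset^{(B(2,3,1))}$, which should follow by tracking the effectivity of that construction.

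First I would fix $n=B(2,3,1)$ and take $H_0=\{2^t:t\in\Nat\}$, which is a computable apart set, and observe that $g_n$ is computable from $c$, hence computable when $c$ is. Restricting $g_n$ to tuples from $H_0$ keeps it computable since $H_0$ is computable. Then I would apply Theorem \ref{joc2} to this computable coloring $g_n:[\Nat]^n\to 2^n$ (with $r=2^n$ colors) to obtain an infinite homogeneous set $H\subseteq H_0$ with $H'\leq_T \emptyset^{(n)}$. Being a subset of $H_0$, the set $H$ inherits the Apartness Condition, as noted in the remark following the definition of apartness. Being homogeneous for $g_n$, the set $H$ satisfies that each $FS^{=i}(H)$ is $c$-monochromatic for $i\in[1,n]$, and the resulting color sequence $\sigma=(c_1,\dots,c_n)$ is, by finite Brauer's Theorem, monochromatic on some configuration $\{a,a+b,a+2b\}\cup\{b\}$. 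This yields exactly the claimed $FS^{\{a,b,a+b,a+2b\}}(H)$ monochromatic, with the stated jump bound.

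There is no genuine obstacle here, since the statement is explicitly flagged as ``an immediate corollary''; the work is entirely bookkeeping. The one place demanding a little care is confirming that the extraction of $a,b$ from the finite coloring $\sigma$ does not raise the complexity of $H$: since $\sigma$ is a finite object (a binary string of fixed length $n$) the search for the Brauer configuration is a finite computation requiring no oracle, so it contributes nothing to the Turing degree of $H$. Thus the only complexity incurred is that of the single Ramsey application, giving $H'\leq_T \emptyset^{(n)}=\emptyset^{(B(2,3,1))}$ as required.
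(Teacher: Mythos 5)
Your proof is correct and follows exactly the route the paper intends: the proposition is stated there as an immediate corollary of the second proof of Theorem \ref{thm:hinbrau} (a single application of $\RT^{B(2,3,1)}_{2^{B(2,3,1)}}$ to the derived coloring $g_n$ on the computable apart set $H_0$) combined with Jockusch's bound from Theorem \ref{joc2}. Your added remarks on transferring the coloring to $H_0$ and on the finiteness of the Brauer extraction are exactly the bookkeeping the paper leaves implicit.
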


Analogously we get arithmetical upper bounds for the other theorems admitting a 
similar proof, some of which are apparently quite strong (e.g., the one derived 
from Folkman's Theorem described above). This should be contrasted with the fact 
that there are no similar upper bounds on the computability-theoretic content of 
Hindman's Theorem, not even when restricted to sums of at most two terms! 
Again, for the latter two theorems, the only upper bound for general computable
solutions is $\emptyset^{(\omega)}$.

We now comment on Reverse Mathematics implications. 
The argument described above is formalizable in $\ACA_0$
(note that most of the finite combinatorial theorems quoted above are provable in 
$\RCA_0$). We then get, for every standard $k\in\Nat$, that
$$\RCA_0 \vdash \RT^{B(2,k,1)}_{2^k} \to \HT^{\{a,a+b,\dots,a+(k-1)b\}\cup\{b\}}_2.$$
Thus we have the following proposition. 
\begin{proposition}
For each standard $k\in\Nat$:
$$\ACA_0 \vdash \HT^{\{a,a+b,a+2b,\dots,a+(k-1)b\}\cup\{b\}}_2.$$
\end{proposition}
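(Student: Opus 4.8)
The plan is to combine the implication already derived just above the statement with the classical metatheorem that $\ACA_0$ proves the Infinite Ramsey's Theorem at each fixed standard exponent and number of colors. Since the preceding paragraph records, for every standard $k$, that
$$\RCA_0 \vdash \RT^{B(2,k,1)}_{2^k} \to \HT^{\{a,a+b,\dots,a+(k-1)b\}\cup\{b\}}_2,$$
it suffices to show that $\ACA_0$ proves the single antecedent instance $\RT^{B(2,k,1)}_{2^k}$ and then to discharge the implication by modus ponens inside $\ACA_0$ (which extends $\RCA_0$).

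First I would check that all the parameters remain standard. Because $k$ ranges over standard natural numbers and Brauer's Theorem is provable in $\RCA_0$, its witnessing function $B$ is (provably) total recursive; hence $n := B(2,k,1)$ evaluates to a standard natural number, and likewise the number of colors is standard. This is the point that must be handled with care: it is what lets us appeal to the \emph{per-instance} fact that $\ACA_0$ proves finite-exponent Ramsey, rather than to the uniform schema $\forall m\,\RT^m$, which is not available in $\ACA_0$. Next I would invoke the classical result that $\ACA_0 \vdash \RT^m_s$ for every standard $m,s\geq 1$ (Simpson, \emph{SOSOA}, Theorem III.7.6). Concretely this is exactly what Jockusch's Theorem~\ref{joc1} delivers in the presence of arithmetical comprehension: for a coloring $f:[\Nat]^m\to s$ with $m$ standard, Theorem~\ref{joc1} produces an infinite $\Pi^0_m$ homogeneous set, and $\ACA_0$ has enough comprehension to form it. Taking $m=n=B(2,k,1)$ and $s=2^k$ yields $\ACA_0 \vdash \RT^{B(2,k,1)}_{2^k}$.

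Finally I would conclude: from $\RT^{B(2,k,1)}_{2^k}$ together with the displayed implication above, $\ACA_0$ proves $\HT^{\{a,a+b,\dots,a+(k-1)b\}\cup\{b\}}_2$. The argument is essentially routine once the framework of the second proof of Theorem~\ref{thm:hinbrau} is in place, so I do not expect a genuine obstacle; the only subtlety worth flagging explicitly is the standardness of $B(2,k,1)$ and $2^k$, which is precisely what separates this per-$k$ theorem (provable in $\ACA_0$) from a uniform statement quantifying over the progression length, which would fail in $\ACA_0$.
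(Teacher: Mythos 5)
Your argument is correct and is essentially the paper's own: the paper derives the proposition immediately from the $\RCA_0$-provable implication $\RT^{B(2,k,1)}_{2^k} \to \HT^{\{a,a+b,\dots,a+(k-1)b\}\cup\{b\}}_2$ (extracted from the second proof of Theorem~\ref{thm:hinbrau}) together with the fact that $\ACA_0$ proves each fixed-exponent instance of Ramsey's Theorem. Your explicit attention to the standardness of $B(2,k,1)$ is a worthwhile clarification but does not change the route.
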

Again, this should be contrasted with the $\ACA_0^{(\omega)}$ upper bound that
is known to hold for the full Finite Sums Theorem, as well as for its restriction
to sums of at most two terms. 

Obviously, similar proof-theoretic upper bounds hold for
many other members of the family by the same argument, as long as the 
underlying finite combinatorial principle does not itself require strong
axioms.

\section{A lower bound on the Hindman-Brauer Theorem}

Let $K$ denote the (computably enumerable but not computable) Halting Set or, equivalently, 
the first Turing jump $\emptyset'$.
We show that there exists a computable coloring $c:\Nat \to 2$ such that $K$ is computable 
from any solution $H$ of the Hindman-Brauer Theorem $\HT^{\{a,a+b,a+2b,b\}}_2$ for the instance $c$, 
i.e., $H$ is infinite and apart and for some $a,b>0$, 
the set $FS^{\{a,a+b,a+2b,b\}}(H)$ is monochromatic. 

We adapt the beautiful proof of the lower bound for the full Hindman's Theorem
by Blass, Hirst and Simpson (Theorem 2.2 in \ref{Bla-Hir-Sim:87}). 
Gaps and short gaps of numbers are defined as in \cite{Bla-Hir-Sim:87}. We recall the 
definitions for convenience. Fix an enumeration of the computably enumerable set $K$
and denote by $K[k]$ the set enumerated in 
$k$ steps of computation by this algorithm. If $n=2^{t_1}+\dots + 2^{t_k}$ with $t_1 < \dots < t_k$
we refer to pairs $(t_i,t_{i+1})$ as the {\em gaps} of $n$. A gap $(a,b)$ of $n$ is 
{\em short in} $n$ if there exists $x\leq a$ such that $x\in K$ but
$x\notin K[b]$. A gap $(a,b)$ of $n$ is {\em very short in} $n$ if
there exists $x\leq a$ such that $x\in K[\mu(n)]$ but
$x\notin K[b]$. A gap of $n$ that is short in $n$ is called {\em a short
gap of} $n$. Let $SG(n)$ denote the set of short gaps of $n$. A gap of $n$ that is
very short in $n$ is called {\em a short gap of} $n$. Let $VSG(n)$ denote the set of 
very short gaps of $n$. Notice that given $n$ one can effectively compute $VSG(n)$
but not $SG(n)$. 

\begin{theorem}\label{thm:lowerbound}
There exists a computable coloring $c:\Nat\to 2$ such that if $H\subseteq \Nat$ 
is a solution to the Hindman-Brauer Theorem 
for instance $c$ then $K$ is computable from $H$. 
\end{theorem}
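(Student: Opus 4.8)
The plan is to adapt the Blass--Hirst--Simpson argument for the full Finite Sums Theorem, using the short-gap/very-short-gap machinery already recalled in the excerpt. The coloring $c:\Nat\to 2$ will be defined so that $c(n)$ records some parity or indicator of the short gaps of $n$; concretely I would set $c(n)$ to be the parity of $|SG(n)|$, the number of short gaps of $n$, or a closely related Boolean combination. The guiding idea of the original proof is that \emph{short} gaps are not computable from $n$ alone (they encode membership in $K$), whereas \emph{very short} gaps are computable; and crucially, when one forms sums of apart elements, the gap structure of the sum is controlled by the gap structures of the summands together with the newly created ``seam'' gaps at the boundaries between consecutive summands. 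The Apartness Condition is exactly what guarantees that the binary representations of the summands occupy disjoint blocks of bit-positions, so that $FS$-sums behave predictably with respect to $\lambda$, $\mu$, and the gap decomposition.

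First I would fix the computable enumeration of $K$ and define $c$ via the short-gap count (mod $2$); this is computable because, although $SG(n)$ itself is not uniformly computable, for a \emph{fixed} $n$ one can compute $SG(n)$ by running the enumeration of $K$ long enough (the relevant witnesses $x\le a$ for each gap $(a,b)$ either enter $K[b]$ or not, and only finitely many candidates must be checked). Next, given a solution $H$ that is infinite and apart with $FS^{\{a,a+b,a+2b,b\}}(H)$ monochromatic for some $a,b>0$, I would analyze how short gaps accumulate across sums of apart elements. The key combinatorial lemma to establish is that, for apart $x_1<\dots<x_m$ in $H$, the short gaps of the sum $x_1+\dots+x_m$ decompose into the short gaps internal to each $x_i$ plus the ``junction'' gaps $(\mu(x_i),\lambda(x_{i+1}))$, and that whether each junction gap is short encodes a bit of information about $K$ relative to the timestamps $\mu$ of the summands. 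By comparing sums of different lengths drawn from $\{a,a+b,a+2b,b\}$ against the common color, I extract, for sufficiently late elements of $H$, a computable procedure that decides membership in $K$.

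The mechanism for the reduction is that monochromaticity across sums whose lengths differ (here the lengths $a$, $b$, $a+b$, $a+2b$ are available and, since $a,b>0$, give at least two distinct arithmetic-progression lengths plus the separate term $b$) forces the parity contributions of the junction gaps to cancel in a controlled way. This is where the algebraic flexibility of having the full set $\{a,a+b,a+2b,b\}$ rather than a single sum-length is used: the differences among these indices let me isolate the contribution of individual junction gaps and hence read off, from $H$ alone, whether a given $x\in K$. Since very short gaps are computable from $n$ and short-but-not-very-short gaps are precisely those witnessing a late entry into $K$, a sufficiently long search, bounded using the $\mu$-values of elements of $H$, converts the color data into a decision procedure for $K$, yielding $K\le_T H$.

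The main obstacle I anticipate is the bookkeeping in the combinatorial lemma on how short gaps of a sum relate to the short gaps of its apart summands: one must verify that forming the sum creates \emph{exactly} the expected junction gaps and does not spuriously create or destroy short gaps through carries, which is controlled precisely because apartness forces $\mu(x_i)<\lambda(x_{i+1})$ so that no carrying occurs across summand boundaries. The second delicate point, inherited from the original Blass--Hirst--Simpson argument, is arranging the indices $a,b$ so that the difference of two available sum-lengths equals one, thereby isolating a single junction gap; with the Brauer-type index set $\{a,a+b,a+2b,b\}$ this requires checking that $b=1$ is forced or else handling the general $b$ by grouping $b$-many consecutive summands, and confirming that the monochromaticity hypothesis is strong enough to make the extraction uniform in $x$. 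Verifying that these steps go through with the restricted index set, rather than with all finite sums as in the full theorem, is the crux of the proof.
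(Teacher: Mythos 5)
There is a genuine gap, and it is at the very first step: your coloring is not computable. You propose $c(n) = |SG(n)| \bmod 2$ and justify its computability by saying that for a fixed $n$ one can compute $SG(n)$ ``by running the enumeration of $K$ long enough.'' This is false. A gap $(a,b)$ of $n$ is short iff some $x \le a$ satisfies $x \in K$ but $x \notin K[b]$; deciding whether such an $x$ is in $K$ at all is exactly the halting problem, and there is no computable bound on how long ``long enough'' is --- if $x$ has not yet appeared in the enumeration you cannot tell whether it ever will. The paper explicitly notes that $VSG(n)$ is computable from $n$ but $SG(n)$ is not, and this is precisely why Blass--Hirst--Simpson (and the paper) color by the parity of the \emph{very} short gaps, $c(n) = VSG(n) \bmod 2$, where the time bound $\mu(n)$ used in the definition is read off from $n$ itself. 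The non-computable quantity $SG(m)$ then enters only through the proved identity $VSG(m+n) = SG(m) + VSG(n)$ for $m$ followed by a suitably late $n$ (late enough that every $x \le \mu(m)$ with $x \in K$ has already entered $K$ by stage $\lambda(n)$, and that $\mu(m+n)=\mu(n)$). Monochromaticity across the lengths $a$, $b$, $a+b$, $a+2b$ then forces $SG(m)$ to be even for $m$ of each of the lengths $a$, $b$, $a+b$, and finally the identity $SG(m+n) = SG(m)+SG(n)+1$ (when the junction gap is short) yields a parity contradiction, showing the junction gap is never short; that is what makes the $K$-decision procedure work. You cannot run this argument with $SG$ in the coloring, both because the instance would not be computable and because the cancellation you need is between a computable quantity ($VSG$) and the hidden one ($SG$).

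A secondary confusion: you worry about arranging for two available sum-lengths to differ by one, or forcing $b=1$, in order to ``isolate a single junction gap.'' This is not needed. Adding a single element $n \in FS^{=b}(H)$ to $m \in FS^{=a}(H)$ creates exactly one new gap, namely $(\mu(m),\lambda(n))$, regardless of $a$ and $b$, because apartness makes the gap set of $m+n$ the disjoint union of the gaps of $m$, the gaps of $n$, and this one seam. What the index set $\{a,a+b,a+2b\}\cup\{b\}$ actually buys is that all four lengths $a$, $b$, $a+b$, $a+2b$ lie in the monochromatic family, which is exactly what is needed to run the three parity claims and then derive the contradiction. Your high-level picture of junction gaps and carry-free sums under apartness is right, but the proposal as written does not produce a computable instance and so does not prove the theorem.
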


\begin{proof}
Consider the following computable coloring of $\Nat$ in $2$ colors. 
$$c(n)= VSG(n)\mod 2.$$

Let $H\subseteq \Nat$ and $a,b>0$ be such that $H$ is infinite, satisfies the Apartness Condition, and is such that all sums of size $a,b,a+b,a+2b$ of elements from $H$ have the same color
under $c$.

\begin{claim}
For every $m\in FS^{=a}(H)$, $SG(m)$ is even. 
\end{claim}

\begin{proof}
Pick $n$ in $FS^{=b}(H)$ so large that the following three points are satisfied:

\begin{enumerate}
\item $\mu(m) < \lambda(n)$, 
\item for all $x\leq \mu(m)$, $x\in K$ if and only if $x\in K[\lambda(n)]$, 
\item $\mu(m+n)=\mu(n)$. 
\end{enumerate}

This choice is legitimate since $H$ satisfies the Apartness Condition and is infinite.
Since $m\in FS^{=a}(H)$ there exists $t_1 < t_2 < \dots < t_a$ elements of $H$
such that $m= t_1+t_2+\dots + t_a$. Since $H$ satisfies the Apartness Condition, 
we have that $\mu(m) = \mu(t_a)$ and $\lambda(m)=\lambda(t_1)$. (Analogous equations
hold for sums of type $b$, $a+b$, $a+2b$). Now observe that elements of $FS^{=b}(H)$
are unbounded with respect to their $\lambda$-projection, i.e. for all $d$ there
there exists $q\in FS^{=b}(H)$ such that $\lambda(q)>d$. This follows from the fact
that $H$ satisfies the Apartness Condition and by the previous observations on $\lambda$-projections
of sums. So requirements 1 and 2 above can be met. To meet requirement 3 just observe that if 
$m=t_1+\dots+t_a$ with $t_1<\dots <t_a$ 
we can pick an $n\in FS^{=b}(H)$, say $n = t'_1+\dots+t'_b$ with $t'_1 <\dots <t'_b$, 
such that $\mu(m)=\mu(t_a) < \lambda(t'_1) = \lambda(n)$ because $H$ is apart.

We now compute the number of very short gaps of $m+n$, arguing as in \cite{Bla-Hir-Sim:87}.
We consider separately the gaps of $m$, the gaps of $n$ and the gap $(\mu(m),\lambda(n))$.

The gap $(\mu(m),\lambda(n))$ is not very short, by choice of $n$ (item (2) above). 

A gap of $n$ is very short in $m+n$ if and only if it is very short in $n$, since
$\mu(m+n)=\mu(n)$. 

A gap $(a,b)$ of $m$ is very short in $m+n$ if and only if it is short (not necessarily very short) as a gap of $m$: 
Suppose that $(a,b)$ is a gap of $m$ very short in $m+n$. By definition 
there exists $x\leq a$ such that $x\in K[\mu(m+n)]$ but $x\notin K[b]$.
Then there exists $x\leq a$ such that $x\in K$ but $x\notin K[b]$ hence
$(a,b)$ is short in $m$. For the other direction suppose $(a,b)$ short
in $m$, that is there exists $x\leq a$ such that $x\in K$ but $x\notin K[b]$.
Then by choice of $n$ ($\mu(m)<\lambda(n)$ by item (1) above and $\lambda(n)\mu(n)$)
we have that $x\leq a$ and $x\in K$ implies $x\in \mu(n)$. But $\mu(n)=\mu(m+n)$
by item (3) above. Hence $(a,b)$ is very short in $m$.

Therefore we have the following equation:
$$VSG(m+n) = SG(m) + VSG(n).$$ 
By hypothesis on $H$, 
$VSG(m+n)$ and $VSG(n)$ have the same parity, since $m+n\in FS^{=a+b}(H)$. 
\end{proof}

\begin{claim}
For every $m\in FS^{=b}(H)$, $SG(m)$ is even. 
\end{claim}

\begin{proof}
Pick $n$ in $FS^{=a}(H)$ so large that the following three points are satisfied:

\begin{enumerate}
\item $\mu(m) < \lambda(n)$, 
\item for all $x\leq \mu(m)$, $x\in K$ if and only if $x\in K[\lambda(n)]$, 
\item $\mu(m+n)=\mu(n)$. 
\end{enumerate}

This choice is legitimate since $H$ satisfies the Apartness Condition and is infinite.
Then argue as previously. We end up with
$$VSG(m+n) = SG(m) + VSG(n).$$ 
By hypothesis on $H$, 
$VSG(m+n)$ and $VSG(n)$ have the same parity, since $m+n\in FS^{=a+b}(H)$. 
\end{proof}

\begin{claim}
For every $m\in FS^{=a+b}(H)$, $SG(m)$ is even.  
\end{claim}

\begin{proof}
Pick $n$ in $FS^{=b}(H)$ so large that the following three points are satisfied:

\begin{enumerate}
\item $\mu(m) < \lambda(n)$, 
\item for all $x\leq \mu(m)$, $x\in K$ if and only if $x\in K[\lambda(n)]$, 
\item $\mu(m+n)=\mu(n)$. 
\end{enumerate}

This choice is legitimate since $H$ satisfies the Apartness Condition and is infinite.
Then argue as previously. We end up with 
$$VSG(m+n) = SG(m) + VSG(n).$$ 
By hypothesis on $H$, 
$VSG(m+n)$ and $VSG(n)$ have the same parity, since $m+n\in FS^{=a+2b}(H)$. 
\end{proof}

\begin{claim}\label{claim2}
For all $m\in FS^{=a}(H)$ and all $n\in FS^{=b}(H)$ such that $\mu(m)<\lambda(n)$
we have: 
$$\forall x \leq \mu(m)(x\in K \leftrightarrow x \in K[\lambda(n)]).$$
\end{claim}

\begin{proof}
By way of contradiction suppose that $(\mu(m),\lambda(n))$ is short. 
Then:
$$SG(m+n) = SG(m) + SG(n) + 1.$$
But $SG(m+n),SG(n),SG(m)$ are all even by the previous claims. Contradiction.  
\end{proof}

We now describe an algorithm showing that $K$ is computable from $H$. 
Given an input $x$, use the oracle to find an $m\in FS^{=a}(H)$ such that
$x \leq \mu(m)$ and an $n\in FS^{=b}(H)$ such that $m < n$ and $\mu(m) < \lambda(n)$. 

Then run the algorithm enumerating $K$ for $\lambda(n)$ steps to decide membership of $x\in K[\lambda(n)]$. 
By Claim \ref{claim2} this also decides membership in $K$. 
\end{proof}

As in \cite{Bla-Hir-Sim:87} a straightforward relativization of the above proof gives the 
following proposition.

\begin{proposition} Over $\RCA_0$,
$\HT^{\{a,a+b,a+2b\}\cup\{b\}}_2$ implies $\ACA_0$.
\end{proposition}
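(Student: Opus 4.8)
The plan is to derive $\ACA_0$ by relativizing the construction of Theorem \ref{thm:lowerbound}. As recalled earlier in the paper, over $\RCA_0$ the system $\ACA_0$ is equivalent to $\forall X \exists Y (Y = X')$, the existence of the Turing jump of every set. So it suffices to work in $\RCA_0 + \HT^{\{a,a+b,a+2b\}\cup\{b\}}_2$, fix an arbitrary set $X$, and produce $X'$ as a set.

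First I would relativize the entire lower-bound argument to $X$. Throughout, replace the halting set $K = \emptyset'$ by $K^X = X'$, the halting set relative to $X$; fix an $X$-computable enumeration of $X'$ and write $K^X[k]$ for what it enumerates in $k$ steps. The notions of gap, short gap, and very short gap are then taken with respect to this enumeration, and the coloring becomes the $X$-computable coloring $c^X(n) = VSG^X(n) \bmod 2$. Every step of the proof of Theorem \ref{thm:lowerbound} goes through verbatim with $K$ replaced by $X'$ and ``computable'' by ``$X$-computable'': the apartness-based choice of a large $n \in FS^{=b}(H)$ meeting requirements (1)--(3), the case analysis separating the gaps of $m$, the gaps of $n$, and the junction gap $(\mu(m),\lambda(n))$, the parity identity $VSG^X(m+n) = SG^X(m) + VSG^X(n)$, the three Claims asserting that $SG^X(\cdot)$ is even on $FS^{=a}(H)$, $FS^{=b}(H)$ and $FS^{=a+b}(H)$, and finally the relativized Claim \ref{claim2}. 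Nothing in these arguments uses anything about $K$ beyond its being the range of the fixed enumeration, so faithfulness of the relativization is immediate.

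Next I would apply $\HT^{\{a,a+b,a+2b\}\cup\{b\}}_2$ to the instance $c^X$, obtaining an infinite apart set $H$ and positive integers $a,b$ making $FS^{\{a,b,a+b,a+2b\}}(H)$ monochromatic for $c^X$. The relativized Claim \ref{claim2} supplies the decision procedure: from the oracle $H \oplus X$ one searches for $m \in FS^{=a}(H)$ with $x \leq \mu(m)$ and $n \in FS^{=b}(H)$ with $m < n$ and $\mu(m) < \lambda(n)$, then reads off membership of $x$ in $X'$ from the $X$-computable set $K^X[\lambda(n)]$. Hence $X' \leq_T H \oplus X$, so $X'$ is $\Delta^0_1$ in $H \oplus X$ and exists as a set by $\Delta^0_1$-comprehension relative to $H \oplus X$, which is available in $\RCA_0$. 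As $X$ was arbitrary, $\forall X \exists Y (Y = X')$ holds, and $\ACA_0$ follows.

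I expect the only real work to be bookkeeping rather than a genuine obstacle. One must verify that every appeal to the enumeration of the halting set in Theorem \ref{thm:lowerbound} uses only $X$-computable data, so that the relativization is exact, and that the parity computations and the final search are formalizable with the $\Delta^0_1$-comprehension and $\Sigma^0_1$-induction of $\RCA_0$. Both are routine: counting very short gaps is an $X$-computable finite operation, the parity arguments are finitary, and the search in the decision procedure terminates by the infinitude and apartness of $H$, so no induction beyond $\Sigma^0_1$ is required.
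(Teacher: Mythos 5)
Your proposal is correct and coincides with the paper's own proof, which consists precisely of the remark that a straightforward relativization of the proof of Theorem \ref{thm:lowerbound} to an arbitrary set $X$ yields $X'$ from any solution, hence $\ACA_0$ via the equivalence with $\forall X \exists Y (Y = X')$. You have simply spelled out the relativization and the final appeal to $\Delta^0_1$-comprehension in more detail than the paper does.
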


Note that the above proof works for any member $\HT^{ A}_2$ of our family 
such that $A$ is guaranteed to contain a set of the form $\{x,x+y,x+2y\}\cup\{y\}$
for some positive integers $x,y$. 

\section{Conclusions}

We have introduced a family of natural restrictions of Hindman's Finite Sums Theorem 
such that each member of the family admits a fairly simple proof, 
has arithmetical upper bounds for computable instances, 
yet many members of the family imply the existence of the Halting Set. 
These are the first examples with these properties. 
In fact, Hindman's Theorem restricted to sums of at most $3$ terms and $4$-colorings 
$\HT^{\leq 3}_4$ shares the 
same $\emptyset^{(\omega)}$ lower bound (by the main result of~\cite{DJSW:16}) but has no other proof 
(resp.~upper bound) apart from the proof (resp.~upper bound) known for the full Finite Sums 
Theorem.
Of all members $\HT^{ A}_2$ of our family we know how to prove that they achieve the only 
lower bounds known for the full Finite Sums Theorem {\em provided that} the set $A$
of lengths of sums for which homogeneity is guaranteed contains a $3$-terms 
arithmetic progression and its difference. This is the best to our current knowledge
but it is an interesting question to characterize the members in the family that
imply $\ACA_0$. 

Some members of our family are apparently strong when compared to 
the family of restrictions of Hindman's Theorem based on the mere number 
of terms in the sums studied in \cite{DJSW:16}. Compare, e.g., $\HT^{\{a,b, a+b,a+2b,a+3b,\dots,a+100b\}}_2$
with $\HT^{\leq 3}_2$. Yet this superficial impression might be misleading.
It is an easy observation that
$\HT^{ A}_2$ for an $A$ such that $A\supseteq\{a,2a\}$ for some $a>0$ implies
$\HT^{\leq 2}_2$. Analogous relations hold for $A\supseteq\{a,2a,3a\}$ and
$\HT^{\leq 3}_2$. These will be discussed in future work. 
Yet it doesn't seem obvious to get an implication 
from those $\HT^{ A}$s and the $\HT^{\leq n}$s. 
Many more non-trivial implications can be established and will reported
elsewhere.

\end{document}